\author{Paul \textsc{Poncet}}
\address{CMAP, \'{E}cole Polytechnique, Route de Saclay, 91128 Palaiseau Cedex, France} 
\email{poncet@cmap.polytechnique.fr}
\newtheorem{theorem}{Theorem}[section]
\newtheorem{corollary}[theorem]{Corollary}
\newtheorem{proposition}[theorem]{Proposition}
\theoremstyle{definition}
\newtheorem{example}[theorem]{Example}
\newtheorem{remark}[theorem]{Remark}
\def\cprime{$'$}
\begin{document}

\title{A memo on bornologies and size functions}

\date{\today}

\subjclass[2010]{46A17, 
                 28C15} 

\keywords{bounded set, bornology, bornological space, topological space, size function, maxitive measure, measure of non-compactness}

\begin{abstract}
We recall the notion of abstract bornology, and connect it with topological spaces and size functions. 
As a generalization of measures of non-compactness, we show how every size function can be mapped to a maxitive measure.  
\end{abstract}

\maketitle

\section{Bornologies}

Given a set $E$, a \textit{bornology} on $E$ is a collection $\mathrsfs{B}$ of subsets of $E$ with the following properties: 
\begin{itemize}
	\item $\mathrsfs{B}$ covers $E$; 
	\item if $B, B' \in \mathrsfs{B}$ then $B \cup B' \in \mathrsfs{B}$; 
	\item if $B \subseteq B'$ and $B' \in \mathrsfs{B}$ then $B \in \mathrsfs{B}$. 
\end{itemize}
The elements of $\mathrsfs{B}$ are called the \textit{bounded} subsets. 
The pair $(E, \mathrsfs{B})$ is a \textit{bornological set}. 
Note that all finite subsets of $E$ are necessarily bounded. 

A map between bornological sets is \textit{bounded} if the direct image of every bounded subset is bounded. 
The bornological sets together with bounded maps as morphisms form a category. 

\begin{example}
The collection of finite subsets of a set is a bornology. 
More generally, on a topological space, a subset is \textit{relatively compact} if it is included in a compact subset; 
then the collection of relatively compact subsets is a bornology. 
We call it the \textit{Heine--Borel} bornology and denote it by $\mathrsfs{H}$. 
\end{example}

\begin{example}
Let $(X, d)$ be a metric space. 
Then the collection of subsets included in an open ball is a bornology. 
\end{example}

\begin{example}
Let $(P, \leqslant)$ be a partially ordered set. 
Assume that $P$ is \textit{directed}, i.e.\ that, for all $x, y \in P$, there exists some $z \in P$ with $x \leqslant z$ and $y \leqslant z$. 
An \textit{upper bound} of a subset $A$ is an element $x \in P$ such that $a \leqslant x$ for all $a \in A$. 
Then the collection of subsets with an upper bound is a bornology. 
\end{example}


Given a map $f : E \to E'$ and a bornology $\mathrsfs{B}'$ on $E'$, the following collection defines a bornology on $E$: 
\[
f^{-1}(\mathrsfs{B}') = \{ B \subseteq E : f(B) \in \mathrsfs{B}' \}. 
\]
Moreover, ${ \mathrsfs{H} } \subseteq { f^{-1}(\mathrsfs{H}') }$ always holds if $E$ and $E'$ are topological spaces and $f$ is a continuous map. 


A topological space equipped with a bornology is \textit{locally bounded} if, for every $x \in E$ and every open subset $G$ with $G \ni x$, there exists a bounded neighborhood $B$ of $x$ such that $B \subseteq G$. 

\begin{proposition}
Let $E$ be a topological space, and $\mathrsfs{B}$ be a bornology on $E$. 
If $E$ is locally bounded, then every (relatively) compact subset of $E$ is bounded, i.e.\ ${ \mathrsfs{H} } \subseteq { \mathrsfs{B} }$. 
\end{proposition}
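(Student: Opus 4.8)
The plan is to run the standard finite-subcover argument, using local boundedness to produce, around each point of a compact set, a bounded open set, and then closing up under finite unions and subsets. Since $\mathrsfs{H}$ is by definition the bornology of relatively compact subsets and $\mathrsfs{B}$ is downward closed, it suffices to show that every \emph{compact} subset is bounded: indeed, if $A \subseteq K$ with $K$ compact and bounded, then $A \in \mathrsfs{B}$ because $\mathrsfs{B}$ is stable under passing to subsets.

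So first I would fix a compact subset $K$ of $E$. For each $x \in K$, apply the definition of local boundedness with the open set $G = E$ (which contains $x$): this yields a bounded neighborhood $B_x$ of $x$. Since $B_x$ is a neighborhood of $x$, I can choose an open set $G_x$ with $x \in G_x \subseteq B_x$; note $G_x \in \mathrsfs{B}$ as a subset of the bounded set $B_x$. The family $\{ G_x : x \in K \}$ is then an open cover of $K$.

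Next, compactness of $K$ gives finitely many points $x_1, \dots, x_n \in K$ with $K \subseteq \bigcup_{i=1}^{n} G_{x_i}$. Each $G_{x_i}$ is bounded, and a bornology is stable under finite unions, so $\bigcup_{i=1}^{n} G_{x_i} \in \mathrsfs{B}$. Stability under subsets then gives $K \in \mathrsfs{B}$, which is exactly what was needed. Combining this with the reduction in the first paragraph yields $\mathrsfs{H} \subseteq \mathrsfs{B}$.

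I do not expect a genuine obstacle here; the only points requiring a little care are (i) invoking local boundedness with $G = E$ rather than with an arbitrary open set, and (ii) replacing the bounded neighborhood $B_x$ by an open set $G_x$ inside it so that the $G_x$ form an \emph{open} cover to which compactness applies — both routine. Everything else is just the defining closure properties of a bornology.
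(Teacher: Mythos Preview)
Your argument is correct and is essentially the same finite-subcover argument the paper gives. If anything you are slightly more careful than the paper: you make explicit both the reduction from relatively compact sets to compact sets via downward closure of $\mathrsfs{B}$, and the passage from a bounded \emph{neighborhood} $B_x$ to an open bounded set $G_x \subseteq B_x$, whereas the paper simply asserts the existence of an open bounded $B_x \ni x$ directly.
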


\begin{proof}
Let $K$ be a compact subset. 
If $x \in K$, there is an open bounded subset $B_x$ such that $x \in B_x$ since $E$ is locally bounded. 
So $K$ is covered by the open family $(B_x)_{x \in K}$, and since $K$ is compact there exists a finite set $F$ of $K$ such that ${ K } \subseteq { \bigcup_{x \in F} B_x }$. The latter subset is bounded as a finite union of bounded subsets, hence $K$ is bounded. 
\end{proof}

\section{Size functions}

We write $\mathbb{R}$ (resp.\ $\mathbb{R}_+$) for the set of real numbers (resp.\ nonnegative real numbers), and $\overline{\mathbb{R}}_+$ is a shorthand for $\mathbb{R}_+ \cup \{ \infty \}$. 
A \textit{size function} on a set $E$ is a map $\rho : 2^E \to \overline{\mathbb{R}}_+$ such that 
\begin{itemize}
  \item $\rho(\emptyset) = 0$;
  \item $\rho(\{ x \}) < \infty$, for all $x \in E$;
  \item $\rho(A) \leqslant \rho(A')$, for all subsets $A \subseteq A'$ of $E$.
\end{itemize}
The extended real number $\rho(A)$ is called the \textit{size} of $A$. 
We say that $\rho$ is \textit{total} if $\rho(\{ x \}) = 0$, for all $x \in E$. 

\begin{example}\label{ex:alphabeta}
Let $E$ be a metric space. 
Then maps $\alpha$ and $\beta$ defined by 
\begin{align*}
\alpha(A) &= \inf \{ t > 0 : A \mbox{ is included in some ball of radius $t$ } \}, \\
\beta(A) &= \inf \{ t > 0 : A \mbox{ has a diameter less than $t$ } \},
\end{align*}
for all $A \subseteq E$, are total size functions on $E$. 
\end{example}

The \textit{monk} of a size function $\rho$ on $E$ is the map $\rho^{\circ}$ defined on $2^E$ by 
\begin{align*}
\rho^{\circ}(A) = \inf\{ t > 0: A \mbox{ is covered by finitely many subsets of size } \leqslant t \}, 
\end{align*}
for all subsets $A$ of $E$. 
A subset $A$ of $E$ is \textit{$\rho$-bounded} if $\rho^{\circ}(A) < \infty$, and \textit{totally $\rho$-bounded} if $\rho^{\circ}(A) = 0$, or equivalently if, for every $s > 0$, there are finitely many subsets of $E$ covering $A$ with size less than $s$. 

\begin{proposition}\label{prop:rhoprops}
Let $\rho$ be a size function on a set $E$. 
Then the following properties hold: 
\begin{enumerate}
	\item\label{prop:rhoprops1} $\rho^{\circ}(A) \leqslant \rho(A)$, for all subsets $A$ of $E$; 
	\item\label{prop:rhoprops2} $\rho^{\circ}(\{ x \}) = \rho(\{ x \})$, for all $x \in E$;
	\item\label{prop:rhoprops3} $\rho^{\circ}(A \cup A') = \max(\rho^{\circ}(A), \rho^{\circ}(A'))$, for all subsets $A$, $A'$ of $E$; 
	\item\label{prop:rhoprops4} $(\rho^{\circ})^{\circ}(A) = \rho^{\circ}(A)$, for all subsets $A$ of $E$.
\end{enumerate}
In particular, $\rho^{\circ}$ is a size function, total if $\rho$ is total.   
\end{proposition}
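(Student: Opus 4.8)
The plan is to establish the four identities in order, extracting along the way the monotonicity of $\rho^{\circ}$, which powers most of the inequalities.

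First I would record that $\rho^{\circ}$ is monotone: if $A \subseteq A'$, any finite family of subsets of size $\leq t$ covering $A'$ also covers $A$, so $\rho^{\circ}(A) \leq \rho^{\circ}(A')$. For \eqref{prop:rhoprops1}, the one-element family $\{A\}$ covers $A$ by a subset of size $\rho(A)$, so when $\rho(A) < \infty$ every real $t > \rho(A)$ is admissible in the infimum, giving $\rho^{\circ}(A) \leq \rho(A)$ (and there is nothing to prove when $\rho(A) = \infty$). For \eqref{prop:rhoprops2}, \eqref{prop:rhoprops1} gives one inequality; conversely, whenever $\{x\}$ is covered by finitely many subsets of size $\leq t$, one of them contains $x$ and hence has $\rho$-size $\geq \rho(\{x\})$ by monotonicity of $\rho$, so $\rho(\{x\}) \leq t$, and taking the infimum over admissible $t$ yields $\rho(\{x\}) \leq \rho^{\circ}(\{x\})$.

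For \eqref{prop:rhoprops3}, the ``$\geq$'' part is immediate from monotonicity of $\rho^{\circ}$. For the ``$\leq$'' part I would assume the right-hand side finite, fix $t$ strictly above it, pick a finite cover of $A$ and a finite cover of $A'$ by subsets of size $\leq t$ (possible by the definition of the infimum), and take their union, which is a finite cover of $A \cup A'$ by subsets of size $\leq t$; hence $\rho^{\circ}(A \cup A') \leq t$, and letting $t$ decrease to the maximum finishes.

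Finally \eqref{prop:rhoprops4}, which is the real point. By \eqref{prop:rhoprops1}, \eqref{prop:rhoprops2} and monotonicity, $\rho^{\circ}$ is itself a size function, so $(\rho^{\circ})^{\circ}$ is defined and \eqref{prop:rhoprops1} applied to $\rho^{\circ}$ gives $(\rho^{\circ})^{\circ}(A) \leq \rho^{\circ}(A)$. For the reverse inequality, suppose $A$ is covered by finitely many subsets $B_1, \dots, B_n$ with $\rho^{\circ}(B_i) \leq t$, and fix $s > t$; since $s > \rho^{\circ}(B_i)$, each $B_i$ admits a finite cover by subsets of $\rho$-size $\leq s$, and concatenating these $n$ finite families gives a finite cover of $A$ by subsets of $\rho$-size $\leq s$, so $\rho^{\circ}(A) \leq s$; letting $s \downarrow t$ and then taking the infimum over admissible $t$ yields $\rho^{\circ}(A) \leq (\rho^{\circ})^{\circ}(A)$. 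The concluding assertions are then immediate: $\rho^{\circ}(\emptyset) = 0$ by \eqref{prop:rhoprops1}, finiteness on singletons and monotonicity are in hand, and totality of $\rho$ forces $\rho^{\circ}(\{x\}) = \rho(\{x\}) = 0$ by \eqref{prop:rhoprops2}. I expect no serious obstacle; the only thing to be careful about is the bookkeeping with strict versus weak inequalities in the nested infima — specifically, that ``$t$ admissible for $\rho^{\circ}$ together with $s > t$'' is what legitimately produces the $\rho$-covers of size $\leq s$ used in \eqref{prop:rhoprops4}.
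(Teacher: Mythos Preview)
Your proof is correct and follows essentially the same approach as the paper: establish monotonicity of $\rho^{\circ}$ early, then handle \eqref{prop:rhoprops1}--\eqref{prop:rhoprops3} exactly as you do. The only minor difference is in \eqref{prop:rhoprops4}: where you unfold the definition and concatenate the $\rho$-covers of the $B_i$ (introducing the auxiliary $s > t$), the paper simply invokes the maxitivity \eqref{prop:rhoprops3} already proved, noting that $\rho^{\circ}(A) \leqslant \rho^{\circ}(\bigcup_i B_i) = \max_i \rho^{\circ}(B_i) \leqslant t$ directly --- this spares the extra limiting step but is the same idea.
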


\begin{proof}
\eqref{prop:rhoprops1} is obvious. 
Note that $\rho^{\circ}(A) \leqslant \rho^{\circ}(A')$ whenever $A \subseteq A'$. 
Combined with \eqref{prop:rhoprops1}, this implies that $\rho^{\circ}$ is a size function. 

\eqref{prop:rhoprops2}. 
Let $x \in E$. 
Let $t > \rho^{\circ}(\{ x \})$. 
Then $\{ x \}$ can be covered by a finite collection of subsets of size less than $t$. 
So $x \in A$ for some subset of size less than $t$. 
Thus, $\rho(\{ x \}) \leqslant t$. 
This shows that $\rho(\{ x \}) \leqslant \rho^{\circ}(\{ x \})$. 
The reverse inequality holds by \eqref{prop:rhoprops1}, so that $\rho(\{ x \}) = \rho^{\circ}(\{ x \})$, as required. 

\eqref{prop:rhoprops3}. 
Let $t > \max(\rho^{\circ}(A), \rho^{\circ}(A'))$. 
Then both $A$ and $A'$ can be covered by a finite collection of subsets of size less than $t$. 
Thus, so does $A \cup A'$. 
This proves that $t \geqslant \rho^{\circ}(A \cup A')$. 
This yields $\rho^{\circ}(A \cup A') \leqslant \max(\rho^{\circ}(A), \rho^{\circ}(A'))$. 
The reverse inequality is clear since $\rho^{\circ}$, so we get the desired result. 

\eqref{prop:rhoprops4}. 
Let $A \subseteq E$. 
We already know that $(\rho^{\circ})^{\circ}(A) \leqslant \rho^{\circ}(A)$ by \eqref{prop:rhoprops1}. 
Now let $t > (\rho^{\circ})^{\circ}(A)$. 
Then there are some subsets $A_1, \ldots, A_n$ of $E$ such that ${ A } \subseteq { \bigcup_{i = 1}^n A_i }$ and $\rho^{\circ}(A_i) \leqslant t$ for all $i$. 
Since $\rho^{\circ}$ is maxitive we get $\rho^{\circ}(A) \leqslant \rho^{\circ}(\bigcup_{i = 1}^n A_i) \leqslant t$. 
This proves that $\rho^{\circ}(A) \leqslant (\rho^{\circ})^{\circ}(A)$. 
\end{proof}

Since $\rho^{\circ}$ is a size function satisfying Proposition~\ref{prop:rhoprops}\eqref{prop:rhoprops3}, this is a \textit{maxitive measure} (see e.g.\ Poncet \cite{Poncet17} for a survey). 
It happens that this is the greatest maxitive measure less than $\rho$, as the following result testifies. 

\begin{corollary}
Let $\rho$ be a size function $\rho$ on a set $E$. 
Then the map $\rho^{\circ}$ is the greatest maxitive measure on $2^E$ below $\rho$. 
\end{corollary}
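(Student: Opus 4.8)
The plan is to unwind the phrase ``greatest maxitive measure below $\rho$'' into two assertions and verify each. First, $\rho^{\circ}$ must itself be a maxitive measure satisfying $\rho^{\circ} \leqslant \rho$ pointwise on $2^E$; this is precisely the content of Proposition~\ref{prop:rhoprops}, parts~\eqref{prop:rhoprops1} and~\eqref{prop:rhoprops3}, together with the fact (also established there) that $\rho^{\circ}$ is a size function. Second, and this is the real substance of the corollary, every maxitive measure $\mu$ on $2^E$ with $\mu \leqslant \rho$ must satisfy $\mu \leqslant \rho^{\circ}$.

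To prove the second point, I would fix such a $\mu$ and an arbitrary subset $A$ of $E$. If $\rho^{\circ}(A) = \infty$ there is nothing to prove, so assume $\rho^{\circ}(A) < \infty$ and let $t > \rho^{\circ}(A)$. Since the set $\{ s > 0 : A \text{ is covered by finitely many subsets of size} \leqslant s \}$ is upward closed, there are subsets $A_1, \ldots, A_n$ of $E$ with $A \subseteq \bigcup_{i=1}^n A_i$ and $\rho(A_i) \leqslant t$ for each $i$. As $\mu \leqslant \rho$, we get $\mu(A_i) \leqslant t$ for each $i$. Extending the two-set maxitivity of $\mu$ to a finite union by an immediate induction, and using monotonicity of $\mu$, we obtain
\[
\mu(A) \leqslant \mu\Bigl( \bigcup_{i=1}^n A_i \Bigr) = \max_{1 \leqslant i \leqslant n} \mu(A_i) \leqslant t.
\]
Letting $t$ decrease to $\rho^{\circ}(A)$ yields $\mu(A) \leqslant \rho^{\circ}(A)$, as desired.

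I do not expect any serious obstacle: the argument is a direct manipulation of the defining infimum of $\rho^{\circ}$. The only points requiring a little care are bookkeeping ones — observing that the family of admissible thresholds $t$ is upward closed (so that every $t$ strictly above $\rho^{\circ}(A)$ admits a witnessing cover by subsets of size $\leqslant t$), promoting the two-set maxitivity axiom to finitely many sets by induction, and disposing of the trivial case $\rho^{\circ}(A) = \infty$. Combined with the first point, this shows that $\rho^{\circ}$ lies below $\rho$, is maxitive, and dominates every maxitive measure below $\rho$, hence is the greatest such.
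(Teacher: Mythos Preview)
Your proof is correct and follows essentially the same route as the paper: you invoke Proposition~\ref{prop:rhoprops} for the fact that $\rho^{\circ}$ is a maxitive measure below $\rho$, then for any maxitive $\mu \leqslant \rho$ and any $t > \rho^{\circ}(A)$ you pick a finite cover of $A$ by sets of $\rho$-size $\leqslant t$ and use maxitivity of $\mu$ to bound $\mu(A)$ by $t$. The only differences are expository---you spell out the upward-closedness of the admissible thresholds, the trivial case $\rho^{\circ}(A)=\infty$, and the induction extending two-set maxitivity to finite unions---none of which alters the argument.
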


\begin{proof}
Let $\nu$ be a maxitive measure on $2^E$ such that $\nu(A) \leqslant \rho(A)$, for all $A \subseteq E$. 
Let $A \subseteq E$ and $t > \rho^{\circ}(A)$. 
Then there are some subsets $A_1, \ldots, A_n$ of $E$ such that ${ A } \subseteq { \bigcup_{i = 1}^n A_i }$ and $\rho(A_i) \leqslant t$ for all $i$. 
This yields $\nu(A_i) \leqslant t$ for all $i$, and since $\nu$ is maxitive we get $\nu(A) \leqslant \nu(\bigcup_{i = 1}^n A_i) \leqslant t$. 
Thus, $\nu(A) \leqslant \rho^{\circ}(A)$, as required. 
\end{proof}

\begin{corollary}
Let $\rho$ be a (total) size function on a set $E$. 
Then the collection of (totally) $\rho$-bounded subsets is a bornology on $E$. 
\end{corollary}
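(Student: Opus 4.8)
The plan is to verify directly the three defining properties of a bornology for the collection $\mathrsfs{B}_{\rho}$ of $\rho$-bounded subsets of $E$, relying on Proposition~\ref{prop:rhoprops} at every step. The totally $\rho$-bounded case will be handled in parallel, by replacing the condition $\rho^{\circ}(A) < \infty$ with $\rho^{\circ}(A) = 0$ throughout; this works because each of the axioms only needs the fact that the relevant subset of $\overline{\mathbb{R}}_+$ (the finite elements, resp.\ $\{0\}$) is downward closed and stable under finite maxima.

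First I would show that $\mathrsfs{B}_{\rho}$ covers $E$: given $x \in E$, Proposition~\ref{prop:rhoprops}\eqref{prop:rhoprops2} gives $\rho^{\circ}(\{x\}) = \rho(\{x\})$, which is finite by definition of a size function (and equal to $0$ when $\rho$ is total), so $\{x\}$ is $\rho$-bounded (resp.\ totally $\rho$-bounded), and these singletons cover $E$. Next, for stability under binary — hence finite — unions, take $A, A' \in \mathrsfs{B}_{\rho}$; Proposition~\ref{prop:rhoprops}\eqref{prop:rhoprops3} yields $\rho^{\circ}(A \cup A') = \max(\rho^{\circ}(A), \rho^{\circ}(A'))$, which is finite (resp.\ zero) as soon as both $\rho^{\circ}(A)$ and $\rho^{\circ}(A')$ are, so $A \cup A' \in \mathrsfs{B}_{\rho}$.

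Finally, for downward closure, suppose $A \subseteq A'$ with $A' \in \mathrsfs{B}_{\rho}$. Monotonicity of $\rho^{\circ}$ — already recorded in the proof of Proposition~\ref{prop:rhoprops}, and in any case immediate since a finite cover of $A'$ by sets of size $\leqslant t$ also covers $A$ — gives $\rho^{\circ}(A) \leqslant \rho^{\circ}(A')$, so $\rho^{\circ}(A)$ is finite (resp.\ zero) whenever $\rho^{\circ}(A')$ is, and thus $A \in \mathrsfs{B}_{\rho}$.

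I do not expect a genuine obstacle here: the statement is essentially a repackaging of Proposition~\ref{prop:rhoprops}, the three bornology axioms corresponding respectively to part~\eqref{prop:rhoprops2}, part~\eqref{prop:rhoprops3}, and the monotonicity of $\rho^{\circ}$. The only point deserving a moment's care is to run the $\rho$-bounded and totally $\rho$-bounded arguments simultaneously, which the last clause of Proposition~\ref{prop:rhoprops} ("total if $\rho$ is total") makes painless.
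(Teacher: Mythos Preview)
Your proof is correct and matches the paper's intended approach: the corollary is stated without proof there, as an immediate consequence of Proposition~\ref{prop:rhoprops}, and you have simply spelled out the three bornology axioms using parts~\eqref{prop:rhoprops2}, \eqref{prop:rhoprops3}, and the monotonicity of $\rho^{\circ}$ exactly as one would expect.
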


Given a size function $\rho$ on a topological set $E$, we say that $\rho$ is \textit{closed} if $\rho(\overline{A}) = \rho(A)$, for all subsets $A$ of $E$, where $A \mapsto \overline{A}$ denotes the topological closure operator. 

\begin{proposition}
Let $\rho$ be a size function on a topological space $E$. 
If $\rho$ is closed, then $\rho^{\circ}$ is closed. 
\end{proposition}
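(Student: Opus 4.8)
The plan is to prove the nontrivial inequality $\rho^{\circ}(\overline{A}) \leqslant \rho^{\circ}(A)$ for every subset $A$ of $E$; the reverse inequality $\rho^{\circ}(A) \leqslant \rho^{\circ}(\overline{A})$ is immediate from $A \subseteq \overline{A}$ together with the monotonicity of $\rho^{\circ}$ observed in the proof of Proposition~\ref{prop:rhoprops}.

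To this end, fix $t > \rho^{\circ}(A)$. By definition of the monk, there are finitely many subsets $A_1, \ldots, A_n$ of $E$ with $A \subseteq \bigcup_{i=1}^n A_i$ and $\rho(A_i)$ less than $t$ for each $i$. Applying the topological closure operator and using that it commutes with \emph{finite} unions, we obtain
\[
\overline{A} \subseteq \overline{\bigcup_{i=1}^n A_i} = \bigcup_{i=1}^n \overline{A_i}.
\]
Since $\rho$ is closed, $\rho(\overline{A_i}) = \rho(A_i) < t$ for each $i$, so $\overline{A}$ is covered by finitely many subsets of size less than $t$. Hence $\rho^{\circ}(\overline{A}) \leqslant t$, and letting $t$ decrease to $\rho^{\circ}(A)$ gives $\rho^{\circ}(\overline{A}) \leqslant \rho^{\circ}(A)$.

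The only point requiring care — and the reason the statement concerns $\rho^{\circ}$ and not a cruder construction — is that closure commutes with unions only for finite families, which is precisely the type of cover entering the definition of $\rho^{\circ}$; an arbitrary union of small-size sets need not have small-size closure. Beyond this observation, the argument is a routine unwinding of the infimum defining the monk, so I do not anticipate any genuine obstacle.
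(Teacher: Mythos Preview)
Your proof is correct and follows essentially the same route as the paper's own argument: pick $t > \rho^{\circ}(A)$, extract a finite cover $A_1,\ldots,A_n$ of small size, pass to closures, use that $\rho$ is closed to keep the sizes small, and conclude $\rho^{\circ}(\overline{A}) \leqslant t$. Your write-up is in fact more explicit than the paper's, spelling out the reverse inequality via monotonicity and the role of closure commuting with finite unions; the paper leaves these points implicit.
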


\begin{proof}
Let $t > \rho^{\circ}(A)$. 
Then ${ A } \subseteq { \bigcup_{i = 1}^n A_i }$, for some subsets $A_i$ of size less than $t$. 
This yields ${ \overline{A} } \subseteq { \bigcup_{i = 1}^n \overline{A}_i }$, and $\rho(\overline{A}_i) = \rho(A_i) \leqslant t$. 
Hence, $t \geqslant \rho(\overline{A})$. 
This proves that $\rho^{\circ}(\overline{A}) = \rho^{\circ}(A)$, for all subsets $A$ of $E$. 
\end{proof}

Given a size function $\rho$ on a topological set $E$, we write $\rho^{+}$ for the map defined on $2^E$ by
\[
\rho^{+}(A) = \inf_{G \supseteq A} \rho(G),
\]
for all $A \subseteq E$, where $G$ runs over the open subsets of $E$ containing $A$. 
We say that $\rho$ is 
\textit{weakly outer-continuous} if $\rho$ and $\rho^{+}$ agree on compact subsets. 

\begin{proposition}\label{prop:compact}
Let $\rho$ be a size function on a topological space $E$. 
Then 
\[
\rho^{\circ}(K) \leqslant \sup_{x \in K} \rho^{+}(\{ x \}),
\]
for all compact subsets $K$ of $E$. 
If moreover $\rho$ is weakly outer-continuous, then 
\[
{ \rho^{\circ}(K) } = { \sup_{x \in K} \rho(\{ x \}) } < { \infty },
\]
for all compact subsets $K$ of $E$. 
\end{proposition}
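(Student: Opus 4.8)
The plan is to establish both displayed formulas by a single open-covering device, in three movements.

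First, for the inequality $\rho^{\circ}(K) \leqslant \sup_{x \in K} \rho^{+}(\{x\})$: fix a compact subset $K$, assume the right-hand side is finite (otherwise there is nothing to prove), and let $s$ be any real number exceeding it. For each $x \in K$ we have $\rho^{+}(\{x\}) = \inf_{G \ni x} \rho(G) < s$, so there is an open set $G_x$ with $x \in G_x$ and $\rho(G_x) < s$. The family $(G_x)_{x \in K}$ is an open cover of $K$, so compactness yields a finite subcover $K \subseteq \bigcup_{i=1}^{n} G_{x_i}$. Then $K$ is covered by finitely many subsets, each of size less than $s$, whence $\rho^{\circ}(K) \leqslant s$ directly from the definition of the monk. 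Letting $s$ decrease to $\sup_{x \in K} \rho^{+}(\{x\})$ gives the claim.

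Next, suppose that $\rho$ is weakly outer-continuous. Since every singleton $\{x\}$ is a compact subset of $E$, the hypothesis gives $\rho^{+}(\{x\}) = \rho(\{x\})$ for all $x \in E$, so the first step already yields $\rho^{\circ}(K) \leqslant \sup_{x \in K} \rho(\{x\})$. For the reverse inequality, fix $x \in K$: monotonicity of $\rho^{\circ}$ together with Proposition~\ref{prop:rhoprops}\eqref{prop:rhoprops2} gives $\rho(\{x\}) = \rho^{\circ}(\{x\}) \leqslant \rho^{\circ}(K)$, and taking the supremum over $x \in K$ establishes the equality $\rho^{\circ}(K) = \sup_{x \in K} \rho(\{x\})$.

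The point needing the most care is the finiteness assertion $\rho^{\circ}(K) < \infty$, which is \emph{not} a formal consequence of the equality just proved, since a priori a compact set might carry singletons of unboundedly large size. I would obtain it by rerunning the covering argument while keeping finite values instead of passing to a supremum: weak outer-continuity forces $\rho^{+}(\{x\}) = \rho(\{x\}) < \infty$ for each $x \in K$, hence there is an open $G_x \ni x$ with $\rho(G_x) < \infty$; a finite subcover $K \subseteq \bigcup_{i=1}^{n} G_{x_i}$ then gives, via the definition of the monk (or via maxitivity and Proposition~\ref{prop:rhoprops}\eqref{prop:rhoprops1}), $\rho^{\circ}(K) \leqslant \max_{1 \leqslant i \leqslant n} \rho(G_{x_i})$, a finite maximum of finitely many finite numbers. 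This is the one place where it matters that each $G_x$ can be chosen of finite size, rather than merely of size bounded by a prescribed $s$.
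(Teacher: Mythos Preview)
Your first two movements match the paper's proof essentially line for line: the open-cover-plus-compactness extraction for the inequality, and the combination of weak outer-continuity with Proposition~\ref{prop:rhoprops}\eqref{prop:rhoprops2} and monotonicity for the equality.

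The one genuine difference is in the finiteness step. The paper observes that the identity $\rho^{+}(\{x\}) = \rho(\{x\})$ forces the function $x \mapsto \rho(\{x\})$ to be upper semicontinuous (if $\rho(\{x_0\}) < t$, choose an open $G \ni x_0$ with $\rho(G) < t$; then every $y \in G$ satisfies $\rho(\{y\}) \leqslant \rho(G) < t$). An upper semicontinuous real-valued function attains its supremum on a compact set, so $\sup_{x \in K} \rho(\{x\}) < \infty$. Your approach instead reruns the covering argument with ``finite'' in place of a fixed bound $s$, getting $\rho^{\circ}(K) \leqslant \max_i \rho(G_{x_i}) < \infty$ directly. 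Both are correct; yours is more self-contained and avoids invoking the extremum principle for u.s.c.\ functions, while the paper's route isolates a structural fact (upper semicontinuity of $x \mapsto \rho(\{x\})$) that has independent interest.
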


\begin{proof}
Let $K$ be a compact subset of $E$, and let $t > \sup_{x \in K} \rho^{+}(\{ x \})$. 
If $x \in K$, we have $t > 0 = \rho^{+}(\{x\}) = \bigwedge_{G \ni x} \rho(G)$, where the infimum is taken over the open subsets $G$ containing $x$. 
So there is some open subset $G_x \ni x$ such that $t > \rho(G_x)$. 
Now $K$ is compact and covered by the open family $(G_x)_{x\in K}$, so there exists a finite subset $F$ of $K$ such that $K \subseteq { \bigcup_{x \in F} G_x }$. 
This proves that $\rho^{\circ}(K) \leqslant t$. 
Thus, $\rho^{\circ}(K) \leqslant \sup_{x \in K} \rho^{+}(\{ x \})$. 

If moreover $\rho$ is weakly outer-continuous, then $\rho^{+}(\{ x \}) = \rho(\{ x \}) = \rho^{\circ}(\{ x \}) \leqslant \rho^{\circ}(K)$, for all $x \in K$, so that 
$\rho^{\circ}(K) \leqslant \sup_{x \in K} \rho^{+}(\{ x \}) = \sup_{x \in K} \rho(\{ x \}) \leqslant \rho^{\circ}(K)$. 
Now, $\rho^{+}(\{ x \}) = \rho(\{ x \}) < \infty$ for all $x$ implies that the map $x \mapsto \rho(\{ x \})$ is upper semicontinuous, hence its sup on $K$ is reached. 
Thus, $\rho^{\circ}(K) < \infty$. 
\end{proof}


\begin{theorem}\label{thm:oc}
Let $\rho$ be a weakly outer-continuous size function on a topological space $E$. 
Then the following properties hold:
\begin{enumerate}
  \item\label{thm:oc1} $E$ is locally bounded with respect to the bornology of $\rho$-bounded subsets.
  \item\label{thm:oc2} If $\rho$ is total, then the relatively compact subsets of $E$ are totally $\rho$-bounded. 
  \item\label{thm:oc3} If $\rho$ is total and $E$ is locally compact, then $E$ is locally bounded with respect to the bornology of totally $\rho$-bounded subsets. 
\end{enumerate}
\end{theorem}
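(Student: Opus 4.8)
The plan is to dispatch the three items separately, each one reducing quickly to material already in hand---chiefly Proposition~\ref{prop:compact} and the monotonicity of $\rho^{\circ}$ recorded in the proof of Proposition~\ref{prop:rhoprops}.

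For \eqref{thm:oc1}, I would fix $x \in E$ and an open set $G \ni x$ and look for a $\rho$-bounded neighborhood of $x$ contained in $G$. The key observation is that a singleton is compact, so weak outer-continuity gives $\rho(\{x\}) = \rho^{+}(\{x\}) = \inf_{G' \ni x}\rho(G')$, which is finite since $\rho$ is a size function. Hence there is an open $G' \ni x$ with $\rho(G') < \infty$, and I would take $B = G \cap G'$. This $B$ is open, contains $x$, is contained in $G$, and $\rho^{\circ}(B) \leqslant \rho^{\circ}(G') \leqslant \rho(G') < \infty$ by monotonicity of $\rho^{\circ}$ together with Proposition~\ref{prop:rhoprops}\eqref{prop:rhoprops1}; so $B$ is a $\rho$-bounded neighborhood of $x$ inside $G$, which is local boundedness.

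For \eqref{thm:oc2}, since $\rho^{\circ}$ is monotone it is enough to show $\rho^{\circ}(K) = 0$ for every compact $K$, a relatively compact set being contained in such a $K$. But $\rho$ is total and weakly outer-continuous, so the second assertion of Proposition~\ref{prop:compact} gives $\rho^{\circ}(K) = \sup_{x \in K}\rho(\{x\}) = 0$. For \eqref{thm:oc3}, I would fix $x \in E$ and an open $G \ni x$, use local compactness to produce a compact neighborhood $K$ of $x$, and set $B = K \cap G$. Then $B$ contains the open set $\mathrm{int}(K)\cap G$, which contains $x$, so $B$ is a neighborhood of $x$; moreover $B \subseteq G$, and $B \subseteq K$ makes $B$ relatively compact, hence totally $\rho$-bounded by \eqref{thm:oc2} (recall $\rho$ is total). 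Thus $E$ is locally bounded for the bornology of totally $\rho$-bounded subsets.

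I do not expect a serious obstacle: the argument is a chain of one-line reductions. The two places that call for a little care are, in \eqref{thm:oc1}, the idea of intersecting $G$ with a $\rho$-small open set $G'$ rather than hoping that $G$ itself is $\rho$-bounded; and in \eqref{thm:oc3}, passing to $K \cap G$ and invoking \emph{relative} rather than plain compactness, which is precisely what lets the weak form of local compactness (every point has a compact neighborhood) suffice.
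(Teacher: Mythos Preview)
Your proposal is correct and follows essentially the same route as the paper: for \eqref{thm:oc1} you intersect $G$ with an open set of finite $\rho$-size produced by weak outer-continuity at the singleton $\{x\}$, for \eqref{thm:oc2} you invoke Proposition~\ref{prop:compact}, and for \eqref{thm:oc3} you intersect $G$ with a compact neighborhood and apply \eqref{thm:oc2}. The only cosmetic difference is that the paper phrases \eqref{thm:oc3} as ``the proof of \eqref{thm:oc1} combined with \eqref{thm:oc2}'' without writing out the details you supply.
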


\begin{proof}
\eqref{thm:oc1}. 
Let $x \in E$. 
Let $G$ be an open subset containing $x$. 
Since $\rho(\{ x \}) < \infty$, we can take $t \in \mathbb{R}_+$ such that $t > \rho(\{ x \})$. 
Since $\rho$ is weakly outer-continuous, we have $t > \rho^{+}(\{ x \})$, so there is some open subset $G_x \ni x$ such that $t > \rho(G_x)$. 
This shows that $x \in { U_x } \subseteq { G }$, where $U_x$ is the open subset $G \cap G_x$. 
Moreover, $\rho^{\circ}(U_x) \leqslant \rho(U_x) \leqslant \rho(G_x) < t < \infty$, hence $U_x$ is $\rho$-bounded. 
This shows that $E$ is locally bounded with respect to the bornology of $\rho$-bounded subsets. 

\eqref{thm:oc2} is a direct consequence of Proposition~\ref{prop:compact}. 

\eqref{thm:oc3} is straightforward from the proof of \eqref{thm:oc1} combined with \eqref{thm:oc2}. 
\end{proof}

\begin{remark}
If $E = \mathbb{R}^n$ and $\rho$ is total as in Theorem~\ref{thm:oc}\eqref{thm:oc2}, then $\rho(B) = 0$ for every $B \subseteq E$ bounded in the usual sense. 
\end{remark}

\begin{example}[Example~\ref{ex:alphabeta} continued]
Let $E$ be a metric space. 
While the list of axioms defining \textit{measures of non-compactness} may vary from one author to the other (see e.g.\ Appell \cite{Appell05} or Mallet-Paret and Nussbaum \cite{Mallet-Paret10a, Mallet-Paret10b}), here is a possible minimalistic definition. 
We call \textit{measure of non-compactness} on $E$ a closed maxitive measure $\nu : 2^E \to \overline{\mathbb{R}}_+$ on $E$ such that $\nu(K) = 0$ for all compact subsets $K$ of $E$. 
Classical examples are the \textit{Hausdorff measure} $\alpha^{\circ}$ induced by the size function $\alpha$, and the \textit{Kuratowski measure} $\beta^{\circ}$ induced by the size function $\beta$. 
Note the relations $\alpha^{\circ} \leqslant \beta^{\circ} \leqslant 2 \alpha^{\circ}$. 
\end{example}

\bibliographystyle{plain}

\def\cprime{$'$} \def\cprime{$'$} \def\cprime{$'$} \def\cprime{$'$}
  \def\ocirc#1{\ifmmode\setbox0=\hbox{$#1$}\dimen0=\ht0 \advance\dimen0
  by1pt\rlap{\hbox to\wd0{\hss\raise\dimen0
  \hbox{\hskip.2em$\scriptscriptstyle\circ$}\hss}}#1\else {\accent"17 #1}\fi}
  \def\ocirc#1{\ifmmode\setbox0=\hbox{$#1$}\dimen0=\ht0 \advance\dimen0
  by1pt\rlap{\hbox to\wd0{\hss\raise\dimen0
  \hbox{\hskip.2em$\scriptscriptstyle\circ$}\hss}}#1\else {\accent"17 #1}\fi}

\end{document}